\newtheorem{theorem}{Theorem}[section]
\newtheorem{definition}{Definition}
\newtheorem{proposition}[theorem]{Proposition}
\theoremstyle{remark}
\newtheorem{remark}[theorem]{Remark}
\numberwithin{equation}{section}
\newcommand{\afrak}{\mathfrak{a}}
\newcommand{\mfrak}{\mathfrak{m}}
\newcommand{\nfrak}{\mathfrak{n}}
\newcommand{\nnn}{\mathfrak{n}_0}
\newcommand{\mcop}{\mfrak_{\mathfrak{n}_0}}
\newcommand{\pfrak}{\mathfrak{p}}
\newcommand{\qfrak}{\mathfrak{q}}
\newcommand{\C}{\mathbb{C}}
\newcommand{\F}{\mathbb{F}}
\newcommand{\N}{\mathbb{N}}
\newcommand{\Z}{\mathbb{Z}}
\newcommand{\Dcali}{\mathcal{D}}
\newcommand{\Icali}{\mathcal{I}}
\newcommand{\Ocali}{\mathcal{O}}
\newcommand{\Tcali}{\mathcal{T}}
\newcommand{\Ycali}{\mathcal{Y}}
\newcommand{\End}{\mathrm{End}}
\newcommand{\Pic}{\mathrm{Pic}}
\newcommand{\GL}{\mathrm{GL}}
\newcommand{\PGL}{\mathrm{PGL}}
\newcommand{\Nr}{\mathrm{Nr}}
\newcommand{\ds}{\displaystyle}
\newcommand{\Tr}{\text{Tr}}
\newcommand{\ebar}{\overline{e}}
\newcommand{\Hbar}{\underline{\mathrm{H}}}
\newenvironment{customtheorem}[1]{%
  
  \theorem
}{\endtheorem}
\newcommand\rquot[2]{
  \mathchoice
  {
    \text{\raise0.5ex\hbox{$#1$}\big/\lower0.5ex\hbox{$#2$}}%
  }
  {
    #1\,/\,#2
  }
  {
    #1\,/\,#2
  }
  {
    #1\,/\,#2
  }
}
\newcommand\lrquot[3]{
  \mathchoice
  {
    \text{\lower0.5ex\hbox{$#1$}\big\backslash\raise0.5ex\hbox{$#2$\!}\big/
      \lower0.5ex\hbox{\!\!$#3$}}%
  }
  {
    #1\,\backslash\,#2\,/\,#3
  }
  {
    #1\,\backslash\,#2\,/\,#3
  }
  {
    #1\,\backslash\,#2\,/\,#3
  }
}
\newcommand\lquot[2]{
  \mathchoice
  {
    \text{\lower0.5ex\hbox{$#1$}\big\backslash\raise0.5ex\hbox{$#2$}}%
  }
  {
    #1\,\backslash\,#2
  }
  {
    #1\,\backslash\,#2
  }
  {
    #1\,\backslash\,#2
  }
}
  \DeclareFontFamily{U}{wncy}{}
    \DeclareFontShape{U}{wncy}{m}{n}{<->wncyr10}{}
    \DeclareSymbolFont{mcy}{U}{wncy}{m}{n}
    \DeclareMathSymbol{\Sha}{\mathord}{mcy}{"58}
\begin{document}
\title{Equidistribution of Hecke orbits on the Picard group of definite Shimura curves}
\author{Matias Alvarado and Patricio P\'erez-Pi\~na}
\address{ Departamento de Matem\'aticas,
Pontificia Universidad Cat\'olica de Chile.
Facultad de Matem\'aticas,
4860 Av.\ Vicu\~na Mackenna,
Macul, RM, Chile}
\email[M. Alvarado]{mnalvarado1@uc.cl }
\email[P. P\'erez-Pi\~na]{paperez15@uc.cl }%
\date{\today}
\maketitle
\begin{abstract}
We prove an equidistribution result about Hecke orbits on the Picard group of Shimura curves coming from definite quaternion algebras over function fields. In particular, we show the equidistribution of Hecke orbits of supersingular Drinfeld modules of rank 2. Our approach is via the automorphic method, using bounds for coefficients of cuspidal automorphic forms of Drinfeld type as the main tool.
\end{abstract}



\section{Introduction}

Let $p$ be an odd prime number and let $q$ be a power of $p$. Denote by $k$ the field $\F_q(t)$ and let $A$ be the polynomial ring $\F_q[t]$. The infinite place $\infty$ corresponds to the valuation $v_\infty$ given by $-\deg$. Consider $\mathcal{D}$ a definite quaternion algebra over $k$ i.e. $\mathcal{D}$ ramifies at the place $\infty$ of $k$. Let $\nfrak_0$ be the product of the finite primes at which $\Dcali$ is ramified. Fix a maximal $A$-order $R$ in $\mathcal{D}$. The Shimura curve $X_{\nfrak_0}$ as defined in \cite{weiyu2011theta} is constructed as follows. Let $Y$ be the genus $0$ curve defined over $k$ whose points over a $k$-algebra $S$ are given by \[Y(S)=\{x\in \mathcal{D}\otimes S\mid x\neq0, \Tr(x)=\Nr(x)=0\}/S^\times,\] where the action of $S^\times$ is by right multiplication. The group $\mathcal{D}^\times$ acts on $Y$ on the right by conjugation. Let $\widehat{k}$ be the ring of finite adeles of $k$ and let $\widehat{A}$ the closure of $A$ in $\widehat{k}$. Naturally we denote $\widehat{\mathcal{D}}=\mathcal{D}\otimes\widehat{k}$ and $\widehat{R}=R\otimes\widehat{A}$. The Shimura curve $X_{\nfrak_0}$ is defined as the double quotient space \[X_{\nfrak_0}\coloneq(\widehat{R}^\times\backslash\widehat{D}^\times\times Y)/\mathcal{D}^\times.\]

The double quotient space \begin{equation}\label{leftideals}\widehat{R}^\times\backslash \widehat{\mathcal{D}}^\times /\mathcal{D}^\times\end{equation} is finite and by a local-global principle, it is in bijection with the finitely many equivalence classes of left ideals of $R$. Let $n$ be this quantity and consider $\{I_i\}_{i=1}^n$ a set of representatives for the left ideals of $R$. Let $R_i$ be the maximal right $A$-order in $\mathcal{D}$ defined by $R_i=\{x\in \mathcal{D}\mid I_ix\subseteq I_i\}$. If $\{g_i\}_{i=1}^n$ denotes a set of representatives for \eqref{leftideals}, then $R_i=\mathcal{D}\cap g_i^{-1}\widehat{R}g_i$.

The Shimura curve is the finite union of the $n$ genus $0$ curves defined over $k$ given by $X_i=Y/R_i^\times$. If $e_i$ denotes the class of degree $1$ in $\Pic(X_{\nfrak_0})$ corresponding to the component $X_i$, then we have \[\Pic(X_{\nfrak_0})\cong \Z e_1\oplus...\oplus\Z e_n.\]

Let $\mathcal{B}=\{e_1,...,e_n\}$. The degree of an element $e=\sum a_ie_i$ in $\Pic(X_{\nfrak_0})$ is $\deg e=\sum a_i$. If $\deg e\neq 0$, the divisor $e$ defines a signed probability measure in $\mathcal{B}$ given by $\delta_e=\frac{1}{\deg e}\sum_{i=1}^n a_i\delta_{e_i}$, where $\delta_{e_i}$ is the Dirac measure supported on $e_i$. 

When $f:\mathcal{B}\to \C$ is a function, we denoted by $\delta_{e}(f)$ the integral of $f$ with respect to the measure $\delta_e$. That is, 
$$\delta_e(f):=\int fd\delta_e=\dfrac{1}{\deg e} \sum_{j=1}^{n}a_jf(e_j).$$

Let $w_i=\#(R_i^\times)/(q-1)$ and define $e^*=\sum w_i^{-1}e_i$. We denote by $\mu$ the probability measure $\delta_{e^*}$.

Let $\mfrak$ be an ideal in $A$ let $t_\mfrak$ be the endomorphism on $\Pic(X_{\nfrak_0})$ coming from the respective Hecke correspondance in $X_{\nfrak_0}$ as defined in \cite{weiyu2017central} section 1.2. We denote by $\mfrak_{\nnn}$ the unique ideal coprime to $\nnn$ such that $\mfrak=\mfrak_{\nnn}\prod_{\pfrak\mid\nnn}\pfrak^{v_\pfrak(\mfrak)}$. Here $v_\pfrak$ is the valuation associated to the prime ideal $\pfrak$.

\begin{customtheorem}{A}[]\label{mainthm1} Let $1\leq i\leq n$. When ordered by $\deg\mcop$, the collection of Hecke orbits $\{t_\mfrak e_i\}$ becomes equidistributed on $\mathcal{B}$ following the measure $\mu$. In other words, for every function $f\colon \mathcal{B}\to\C$, we have that \[\delta_{t_\mfrak e_i}(f)\to\delta_{e^*}(f)\mbox{ as }\deg\mcop\to\infty.\]
\end{customtheorem}

We have the following corollary in the case that $\nfrak_0=\pfrak$ is a prime. Let $\F_\pfrak$ be the residue field $A/\pfrak$. Recall that a Drinfeld module $\phi$ of rank 2 over $\overline{\F_\pfrak}$ is said to be supersingular if its endomorphism ring $\End(\phi)$ is an order in the definite quaternion algebra ramified at $\pfrak$. Denote by ${D_\pfrak^{ss}}$ the set of supersingular Drinfeld modules of rank $2$ over $\overline{\F_\pfrak}$. By Theorem 2.6 in \cite{papikian2005variation}, $\mathcal{B}$ is in natural bijection with the set ${D_\pfrak^{ss}}$. It is given by sending $e_i$ to the unique supersingular Drinfeld module $\phi_i$ such that $\End(\phi_i)\simeq R_i.$ Let $\Phi=\sum w_i^{-1}\phi_i$ be the divisor over $D_\pfrak^{ss}$ corresponding to $e^*$ via this identification.

\begin{customtheorem}{B}[]\label{mainthm2}
Let $\phi$ be an element in $D^{ss}_\pfrak$. When ordered by $\deg\mfrak_\pfrak$, the collection of Hecke orbits $\{T_\mfrak \phi\}$ becomes equidistributed on $D^{ss}_\pfrak$ following the measure $\delta_{\Phi}$. In other words, for every function $f\colon D_\pfrak^{ss}\to\C$, we have that \[\delta_{T_\mfrak \phi}(f)\to\delta_{\Phi}(f)=\frac{q^2-1}{q^{\deg\pfrak}-1}\sum_{i=1}^n\frac{f(\phi_i)}{w_i}\mbox{ as }\deg\mfrak_\pfrak\to\infty.\]
\end{customtheorem}

\begin{proof}
Use the previous bijection and apply Theorem \ref{mainthm1} with $\Dcali$ the definite quaternion algebra ramified at $\pfrak$. In this case, the mass formula for supersingular Drinfeld modules (\cite{gekeler83} Satz 5.9(iii)) implies that $$\deg(\Phi)=\sum_{i=1}^n \dfrac{1}{w_i}=\dfrac{q^{\deg \pfrak}-1}{q^2-1}.$$
\end{proof}

Our approach to prove Theorem \ref{mainthm1} is the same as in \cite{menares2012equidistribution}. We relate the coefficients of $t_\mfrak e_i$ and $e^*$ with Fourier coefficients of an appropriate automorphic form of Drinfeld type, and then use Deligne-Ramanujan bounds to show the result.

\subsection*{Overview of the article}
In section 2 we explain the strategy to prove our main theorem. Namely, we recall the Brandt matrices and their relation with Hecke operators to finally explain their role in the proof of Theorem \ref{mainthm1}. We show how the $*$-weak convergence is a consequence of the convergence to zero of a sequence involving the coefficients of the Brandt matrices.
In section 3 we introduce the automorphic forms of Drinfeld type, which can be seen as an analogue of classical modular forms. We recall some properties, the action of Hecke operators, their Fourier expansion and Deligne-Ramanujan bounds. We end the section by recalling a family of automorphic forms that allows us to decompose the space of forms in a way suitable to our purposes.
Finally, in section 4 we prove Theorem \ref{mainthm1}.

\section*{Acknowledgements}
The first author was supported by ANID Doctorado Nacional 21200910. The second author
was supported by ANID Doctorado Nacional 21200911. We thank Ricardo Menares for valuable comments and for having read this manuscript. We are also grateful to
Mihran Papikian for answering questions about automorphic forms.

\section{Strategy of the proof}\label{secpreliminaries}

The main tool in order to describe our strategy are Brandt matrices for which we refer to \cite{weiyu2011theta}.

Let $\Nr(I_i)$ be the fractional ideal of $A$ generated by the elements $N(b)$ with $b\in I_i$. Set $N_{ij}\coloneq \Nr(I_i)\Nr(I_j)^{-1}$. Define \[B_{ij}(\mfrak)=\frac{\#\{b\in I_iI_j^{-1}\mid \Nr(b)N_{ij}^{-1}=\mfrak\}}{\#(R_j^\times)}\]

The matrix $B(\mfrak)=(B_{ij}(\mfrak))$ belongs to $M_n(\Z)$ and it is called the $\mfrak$-th Brandt matrix.

\begin{proposition}\label{heckeformula}
    The following formula holds: \[t_\mfrak e_i=\sum_{j=1}^n B_{ij}(\mfrak)e_j\]
\end{proposition}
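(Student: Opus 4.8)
The plan is to reduce Wei--Yu's definition of $t_\mfrak$ to the classical sub-ideal description of the Hecke correspondence, and then to carry out the standard Brandt-matrix bookkeeping over the function field $k$. Throughout I use the bijection between the double coset set \eqref{leftideals} and the set of left ideal classes of $R$, normalised so that $g_i$ corresponds to $I_i=\mathcal{D}\cap\widehat{R}g_i$, together with the identification $\Pic(X_{\nfrak_0})=\Z e_1\oplus\dots\oplus\Z e_n$ coming from the components $X_i=Y/R_i^\times$, in which $e_j$ is the degree-one generator attached to $X_j$.

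First I would record the explicit shape of the operator. Unwinding the construction of $t_\mfrak$ in \cite{weiyu2017central}, Section~1.2 (equivalently, describing it as the double coset operator attached to an element $\xi_\mfrak\in\widehat{\mathcal{D}}^\times$ whose reduced norm generates $\mfrak$ and which is trivial at the places away from $\mfrak$, and decomposing that double coset into $\widehat{R}^\times$-cosets), one checks that under the identification above $t_\mfrak$ becomes the combinatorial operator
\[
t_\mfrak e_i=\sum_{j=1}^{n} c_{ij}\,e_j,\qquad
c_{ij}=\#\{\,J\subseteq I_i\mid \Nr(J)=\mfrak\,\Nr(I_i),\ J\cong I_j\text{ as left }R\text{-modules}\,\}.
\]
Here the reduction to this operator uses that each $X_j$ is a smooth genus $0$ curve whose Picard group is $\Z e_j$ with $\deg e_j=1$, so that the cycle $t_\mfrak e_i$ is recovered from its multiplicities along the components; and at the finitely many primes $\pfrak\mid\nfrak_0$ the norm condition forces $J_\pfrak$ to be the $v_\pfrak(\mfrak)$-th power of the maximal ideal of $R_\pfrak$ times $(I_i)_\pfrak$, so the local correspondence there is the expected degree-one Atkin--Lehner-type map.

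It remains to identify $c_{ij}$ with $B_{ij}(\mfrak)$, which is a direct count. Since $\mathcal{D}$ is a division algebra (it ramifies at $\infty$) and $R$ is a maximal order, every left ideal of $R$ is locally principal, and any left $R$-linear isomorphism $I_j\xrightarrow{\sim}J$ extends to a left $\mathcal{D}$-linear automorphism of $\mathcal{D}$, hence is right multiplication by some $c\in\mathcal{D}^\times$. Thus $J\cong I_j$ if and only if $J=I_jc$ for some $c\in\mathcal{D}^\times$, and $c$ is then determined modulo left multiplication by $R_j^\times=\{u\in\mathcal{D}^\times:I_ju=I_j\}$, an action which is free. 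With the conventions of the definition of $B_{ij}(\mfrak)$, the containment $I_jc\subseteq I_i$ is precisely the condition $c\in I_iI_j^{-1}$, and $\Nr(I_jc)=\Nr(I_j)\,\Nr(c)$ turns $\Nr(J)=\mfrak\,\Nr(I_i)$ into $\Nr(c)\,N_{ij}^{-1}=\mfrak$. Collecting the $c$'s into $R_j^\times$-orbits gives
\[
c_{ij}=\frac{\#\{\,c\in I_iI_j^{-1}\mid \Nr(c)N_{ij}^{-1}=\mfrak\,\}}{\#(R_j^\times)}=B_{ij}(\mfrak),
\]
which is the claim.

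The step I expect to be the real obstacle is the first one: extracting from Wei--Yu's moduli-theoretic definition the precise sub-ideal description \emph{with the correct multiplicities} — checking that the second projection in the Hecke correspondence is unramified so that each $J$ is counted exactly once (arguing prime by prime, using that $q$ is odd, and treating the ramified primes $\pfrak\mid\nfrak_0$ separately), and that passing to $\Pic$ collapses the conic factor $Y$ without introducing spurious factors. This is the function-field analogue of the classical Eichler--Brandt comparison; once it is in place, the remaining computation is formal.
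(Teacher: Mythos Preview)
The paper does not prove this proposition at all: its entire proof is the sentence ``See Proposition~1.5 in \cite{weiyu2011theta}.'' So you are supplying an argument where the paper only gives a citation, and your outline is exactly the classical Eichler--Brandt comparison that underlies Wei--Yu's result (double-coset Hecke operator $\Rightarrow$ count of sub-left-ideals of prescribed norm in a fixed class $\Rightarrow$ count of elements in the linking module modulo right-order units). In that sense your route is not different from the paper's; it \emph{is} the content behind the citation.

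One point to verify carefully, since it is precisely where the non-commutativity bites: with the standard two-sided inverse (so that $I_jI_j^{-1}=R$ and $I_j^{-1}I_j=R_j$), the condition $I_jc\subseteq I_i$ is equivalent to $c\in I_j^{-1}I_i$, not to $c\in I_iI_j^{-1}$ as you wrote. In a quaternion algebra these two lattices need not coincide. Whether this is an actual index swap or merely a clash with Wei--Yu's convention for $I_j^{-1}$ depends on how they set things up, so before declaring $c_{ij}=B_{ij}(\mfrak)$ you should check their definition and, if necessary, either reverse the roles of $i$ and $j$ in your sub-ideal description or invoke the symmetry $w_jB_{ij}(\mfrak)=w_iB_{ji}(\mfrak)$. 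Apart from this bookkeeping, your sketch is sound, and your own caveat about extracting the sub-ideal description with the correct multiplicities from the moduli-theoretic definition correctly identifies where the real work lies.
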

\begin{proof}
    See Proposition 1.5 in \cite{weiyu2011theta}.
\end{proof}

Recall that we are interested in the measure associated to the divisor 
$$e^*=\sum_{j=1}^{n}\dfrac{1}{w_j}e_j.$$

The degree of $t_\mfrak e_i$ is equal to $\sum_{j=1}^{n}B_{ij}(\mfrak)$. This quantity is related to the arithmetic function $\sigma_{\nfrak_0}$, which is defined by

\begin{align*}
\sigma_{\nfrak_0} \colon \F_q[t] & \longrightarrow \N \\
a&\longmapsto \sum_{m \text{ monic }\atop m|a, \ (\nfrak_0,m)=1} q^{\deg m}
\end{align*}

\begin{proposition}\label{degtm}
The degree of $\deg t_\mfrak e_i$ does not depends on $i$ and it equals $\sigma_{\nfrak_0}(\mfrak).$
\end{proposition}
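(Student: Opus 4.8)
The plan is to read $\deg t_\mfrak e_i$ off the Brandt matrix, reinterpret the resulting row sum as a count of integral ideals of the maximal order $R_i$, and evaluate that count prime by prime. By Proposition~\ref{heckeformula} we have $\deg t_\mfrak e_i=\sum_{j=1}^n B_{ij}(\mfrak)$, the $i$-th row sum of $B(\mfrak)$. The first step is to recognise this row sum. Since $\Dcali$ is a definite, hence division, quaternion algebra, every nonzero element of $I_iI_j^{-1}$ is invertible, so the right $R_j^\times$-action implicit in the definition of $B_{ij}(\mfrak)$ is free and $B_{ij}(\mfrak)$ is exactly the number of $R_j^\times$-orbits on $\{b\in I_iI_j^{-1}\mid \Nr(b)N_{ij}^{-1}=\mfrak\}$. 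By the ideal-theoretic description of Brandt matrices recalled in \cite{weiyu2011theta}, as $j$ runs over the ideal classes these orbits are in bijection with the integral right $R_i$-ideals of reduced norm $\mfrak$, so
\[\deg t_\mfrak e_i=\#\bigl\{\,\Jfrak\ \text{an integral right }R_i\text{-ideal}\ :\ \Nr(\Jfrak)=\mfrak\,\bigr\}.\]

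The second step is a purely local computation. Each $R_i$ is a maximal $A$-order in $\Dcali$, hence locally principal, and reduced norms of ideals are local, so the local--global principle gives
\[\deg t_\mfrak e_i=\prod_{\pfrak}\#\bigl\{\text{integral right ideals of }R_i\otimes A_\pfrak\ \text{of reduced norm}\ \pfrak^{\,v_\pfrak(\mfrak)}\bigr\}.\]
When $\pfrak\nmid\nfrak_0$ the algebra $\Dcali$ splits at $\pfrak$, so $R_i\otimes A_\pfrak\cong M_2(A_\pfrak)$, and its integral right ideals of reduced norm $\pfrak^k$ correspond to the $A_\pfrak$-sublattices of $A_\pfrak^2$ of index $\pfrak^k$, of which there are $\sum_{l=0}^{k}q^{\,l\deg\pfrak}$ (the standard $\GL_2$ count, equal to the degree of the $\pfrak^k$-th Hecke operator). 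When $\pfrak\mid\nfrak_0$ the completion is the maximal order of the local division quaternion algebra, whose unique integral right ideal of reduced norm $\pfrak^k$ is the $k$-th power of its maximal two-sided ideal, so the local count is $1$. Substituting,
\[\deg t_\mfrak e_i=\prod_{\pfrak\nmid\nfrak_0}\ \sum_{l=0}^{v_\pfrak(\mfrak)}q^{\,l\deg\pfrak},\]
which manifestly does not depend on $i$. Finally, a monic divisor $m$ of $\mfrak$ with $(m,\nfrak_0)=1$ is exactly a choice of $l_\pfrak\in\{0,\dots,v_\pfrak(\mfrak)\}$ for each $\pfrak\nmid\nfrak_0$, with $q^{\deg m}=\prod_{\pfrak} q^{\,l_\pfrak\deg\pfrak}$; expanding the product above term by term therefore yields $\sum_{m}q^{\deg m}=\sigma_{\nfrak_0}(\mfrak)$, which is the assertion.

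The main obstacle is the first step: making the passage from $\sum_j B_{ij}(\mfrak)$ to the count of integral right $R_i$-ideals of reduced norm $\mfrak$ fully rigorous requires carefully matching the $R_j^\times$-orbit description of each Brandt matrix entry with ideals, keeping track of left orders and ideal classes so that every such ideal is counted once and only once as $j$ varies. This bookkeeping is intrinsic to the quaternionic Brandt matrix formalism, so I would either reproduce it or quote the relevant statement from \cite{weiyu2011theta}; once it is available, the remaining two steps are routine, and the independence of $i$ drops out since all the $R_i$ have the same completions.
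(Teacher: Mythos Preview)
Your argument is correct and is the standard one. The paper itself gives no proof at all: its entire argument is the one-line citation ``See the remark right after Proposition~2.4 in \cite{weiyu2011theta}.'' What you have written is essentially an unpacking of that reference---identifying $\sum_j B_{ij}(\mfrak)$ with the number of integral right $R_i$-ideals of reduced norm $\mfrak$, and then computing this count locally (split primes contribute $\sum_{l=0}^{v_\pfrak(\mfrak)}q^{l\deg\pfrak}$, ramified primes contribute~$1$). The step you flag as the main obstacle, namely the bijection between Brandt row entries and integral ideals of the right order, is exactly the content supplied by the cited passage, so your instinct to quote it there is on the mark; once granted, the local computation and the final multiplicative identification with $\sigma_{\nfrak_0}(\mfrak)$ are routine, and your treatment of them is clean.
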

\begin{proof}
    See the remark right after Proposition 2.4 in \cite{weiyu2011theta}.
    
\end{proof}
Theorem \ref{mainthm1} establishes that $\delta_{T_\mfrak e_i}(f)\to \delta_{e^*}(f)$ for every function $f\colon \mathcal{B}\to\C$. Using Propositions \ref{heckeformula} and \ref{degtm}, we have that
$$\delta_{T_\mfrak e_i}(f)=\dfrac{1}{\sigma_{\nfrak_0}(\mfrak)}\sum_{j=1}^{n}B_{ij}(\mfrak)f(e_j) \text{ and } \delta_{e^*}(f)=\dfrac{1}{\deg(e^*)}\sum_{j=1}^{n}\dfrac{1}{w_j}f(e_j).$$

Then

\begin{align*}
    \left|\delta_{T_\mfrak e_i}(f)-\delta_{e^*}(f)\right| &= \left| \sum_{j=1}^{n}\dfrac{B_{ij}(\mfrak)}{\sigma_{\nfrak_0}(\mfrak)}f(e_j)-\frac{1}{\deg(e^*)}\sum_{j=1}^{n}\dfrac{f(e_j)}{w_j}\right| \\
    &\leq \max_j\left\{f(e_j)\right\}\sum_{j=1}^{n} \left|\dfrac{B_{ij}(\mfrak)}{\sigma_{\nfrak_0}(\mfrak)}-\dfrac{1}{w_j\deg(e^*)} \right|
\end{align*}

We conclude that if $\left|\dfrac{B_{ij}(\mfrak)}{\sigma_{\nfrak_0}(\mfrak)}-\dfrac{1}{w_j\deg(e^*)} \right|\to 0$ for all $j$, then $\delta_{T_\mfrak e_i}\overset{\ast}{\rightharpoonup} \delta_{e^*}$. Henceforth we will study this difference and prove its convergence to $0.$ In order to do this, in the next section we proceed to recall the theory of automorphic forms of Drinfeld type which will be the main tool.

The next proposition we will be used in the course of proving Theorem \ref{mainthm1}.

\begin{proposition}\label{propBrandt} For every index $i$, there exists an index $k$ such that \[B_{ij}(\mfrak)/\sigma_{\nfrak_0}(\mfrak)=B_{kj}(\mcop)/\sigma_{\nfrak_0}(\mcop)\] for every index $j$.
\end{proposition}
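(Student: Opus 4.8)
The plan is to reduce the claim about $\mfrak$ to the corresponding claim about $\mcop$ by factoring out the part of $\mfrak$ supported at the ramified primes $\pfrak \mid \nnn$, using the multiplicativity properties of both the Brandt matrices and the arithmetic function $\sigma_{\nfrak_0}$. First I would recall that the Brandt matrices satisfy the Hecke-algebra relations: $B(\mfrak\mfrak') = B(\mfrak)B(\mfrak')$ whenever $\mfrak,\mfrak'$ are coprime, and for a prime $\pfrak \mid \nnn$ one has a particularly simple description of $B(\pfrak^r)$ — in the ramified case the local Hecke operator at such a $\pfrak$ acts as a permutation (the Atkin–Lehner/Fricke type involution associated to $\pfrak$, coming from the two-sided ideal of $R$ above $\pfrak$), so $B(\pfrak^r)$ is a permutation matrix, say $P_\pfrak^{\,r}$, for the permutation $\sigma_\pfrak$ of $\{1,\dots,n\}$ induced by this involution. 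Writing $\mfrak = \mcop \prod_{\pfrak \mid \nnn}\pfrak^{v_\pfrak(\mfrak)}$, the coprimality of $\mcop$ with $\prod_{\pfrak\mid\nnn}\pfrak^{v_\pfrak(\mfrak)}$ gives
\[
B(\mfrak) = B(\mcop)\cdot \prod_{\pfrak \mid \nnn} P_\pfrak^{\,v_\pfrak(\mfrak)} = B(\mcop)\cdot P_\tau,
\]
where $\tau = \prod_{\pfrak \mid \nnn}\sigma_\pfrak^{\,v_\pfrak(\mfrak)}$ is a permutation of the index set. Hence $B_{ij}(\mfrak) = B_{i',j}(\mcop)$ where $i' = \tau^{-1}(i)$ (or $\tau(i)$, depending on the convention for composing $B(\mcop)$ with the permutation matrix; I would fix the side by checking Proposition \ref{heckeformula}).

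Next I would handle the normalizing factor. Since $\deg$ is preserved, Proposition \ref{degtm} already tells us $\deg t_\mfrak e_i = \sigma_{\nfrak_0}(\mfrak)$ independently of $i$; combined with $t_\mfrak e_i = \sum_j B_{ij}(\mfrak) e_j$ this is the identity $\sum_j B_{ij}(\mfrak) = \sigma_{\nfrak_0}(\mfrak)$. What I actually need is the sharper statement $\sigma_{\nfrak_0}(\mfrak) = \sigma_{\nfrak_0}(\mcop)$. This follows directly from the definition of $\sigma_{\nfrak_0}$: a monic divisor $m \mid \mfrak$ with $(\nnn,m)=1$ is automatically a divisor of $\mcop$, since the $\pfrak$-part of $\mfrak$ for $\pfrak\mid\nnn$ contributes nothing; conversely every monic divisor of $\mcop$ is a divisor of $\mfrak$ coprime to $\nnn$. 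So the two sums defining $\sigma_{\nfrak_0}(\mfrak)$ and $\sigma_{\nfrak_0}(\mcop)$ are literally the same sum, giving $\sigma_{\nfrak_0}(\mfrak) = \sigma_{\nfrak_0}(\mcop)$.

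Putting these together: with $k = \tau^{-1}(i)$ we get, for every $j$,
\[
\frac{B_{ij}(\mfrak)}{\sigma_{\nfrak_0}(\mfrak)} = \frac{B_{kj}(\mcop)}{\sigma_{\nfrak_0}(\mcop)},
\]
which is exactly the assertion. The main obstacle I anticipate is pinning down precisely that $B(\pfrak^r)$ is a permutation matrix for $\pfrak \mid \nnn$ and identifying the permutation: this is where the ramified local structure of $\mathcal{D}$ at $\pfrak$ enters, via the fact that $R_\pfrak$ is the maximal order in a division algebra over the completion and its unique two-sided prime ideal is principal, so that the set $\{b \in I_iI_j^{-1} : \Nr(b)N_{ij}^{-1} = \pfrak^r\}$ is either empty or a single $R_j^\times$-orbit. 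I would either extract this from the description of Brandt matrices in \cite{weiyu2011theta} (e.g.\ from their treatment of the Hecke action at ramified primes) or prove it directly by a local computation at $\pfrak$; once that is in hand, the multiplicativity $B(\mfrak) = B(\mcop)\prod_\pfrak B(\pfrak^{v_\pfrak(\mfrak)})$ and the rest of the argument are routine.
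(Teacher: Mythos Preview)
Your approach is correct and is precisely the argument the paper refers to: the paper's own proof simply cites \cite{menares2012equidistribution}, section~1.2, whose argument is exactly the one you outline (multiplicativity of Brandt matrices, $B(\pfrak^r)$ a permutation matrix for $\pfrak\mid\nnn$, and the trivial observation $\sigma_{\nfrak_0}(\mfrak)=\sigma_{\nfrak_0}(\mcop)$). The permutation-matrix claim you flag as the main obstacle is indeed the crux, and your sketch via the local structure at ramified primes together with the row-sum identity $\sum_j B_{ij}(\pfrak^r)=\sigma_{\nfrak_0}(\pfrak^r)=1$ and the symmetry $w_jB_{ij}=w_iB_{ji}$ is the standard way to see it; just be sure to place the permutation factor on the left (i.e.\ use $B(\mfrak)=B\bigl(\prod_{\pfrak\mid\nnn}\pfrak^{v_\pfrak(\mfrak)}\bigr)B(\mcop)$, available by commutativity) so that the row index, not the column index, gets permuted.
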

\begin{proof}
This can be proven using the same argument given in \cite{menares2012equidistribution} at the beginning of section 1.2. The argument relies on properties of the Brandt matrices that can be found in Proposition II.1.
\end{proof}

\section{Automorphic forms of Drinfeld type}

Let $k_\infty=\F_q((1/t))$ be the completion of $k$ at the infinite place, $\Ocali_\infty=\F_q[[1/t]]$ its ring of integers and $\pi_\infty$ a uniformizer. We begin this section by recalling some facts about the Bruhat-Tits tree of $\PGL_2(k_\infty)$. A complete exposition about it can be found in \cite{serretrees}.

Let $\Tcali$ be the Bruhat-Tits tree of $\PGL_2(k_\infty)$. We denote by $X(\Tcali)$ and $Y(\Tcali)$ its set of vertices and edges respectively. The automorphic forms of Drinfeld type will be functions on the set of oriented vertex of $\Tcali$, with values in $\C$. This functions play the role of modular forms in the classical setting. 

Define the Iwahori group $\Icali$ as 
\[\Icali=\left\{ \begin{pmatrix} a&b \\ c&d \end{pmatrix} \in \PGL_2(\Ocali_\infty)\left|c\in (\pi_\infty) \right.\right\}\] and denote the set of oriented edges of $\Tcali$ by $\Ycali(\Tcali)$. The group $\Icali$ is the stabilizer of a distingushed oriented edge under the transitive action of $\PGL_2(k_\infty)$ over $\Ycali(\Tcali)$. Therefore, there is an identification

\begin{equation}\label{edges}\rquot{\PGL_2(k_\infty)}{\Icali}\leftrightarrow \Ycali(\Tcali).\end{equation}



If $e$ is an oriented edge, then $\overline{e}$ denote the same edge with the inverted orientation. The functions $\Ycali(\Tcali)\to X(\Tcali)$ source and target are denoted by $s$ and $t$ respectively.

Let $\nfrak$ be an ideal of $A$. The subgroup $\Gamma_0(\nfrak)$ of $\GL_2(\F_q[t])$ is defined by

\begin{align*}
        \Gamma_0(\nfrak)=\left\{ \begin{pmatrix} a & b \\ c& d\end{pmatrix}\in \GL_2\left(\F_q[t]\right) |c\equiv 0 \text{ (mod }\nfrak) \right\}
    .\end{align*}

\begin{definition}
    An automophic form of Drinfeld type for the group $\Gamma_0(\nfrak)$ is a function $f \colon \Ycali(\Tcali) \to \C$ which satisfies
    \begin{itemize}
        \item[i)]$f(e)+f(\ebar)=0$ for any $e\in \Ycali(\Tcali).$
        \item[ii)]$\sum_{t(e)=v} f(e)=0$ for any $v\in X(\Tcali).$
        \item[iii)] $f(\gamma e)=f(e)$ for all $\gamma \in \Gamma_0(\nfrak)$.
    \end{itemize}

If additionally, $f$ has compact support modulo $\Gamma_0(\nfrak)$, we say that $f$ is cuspidal. 
\end{definition}

The space of automorphic forms of Drinfeld type for $\Gamma_0(\nfrak)$ is denoted by $\Hbar(\Tcali,\C)^{\Gamma_0(\nfrak)}$. The subspace of cuspidal forms is denoted by $\Hbar_!(\Tcali,\C)^{\Gamma_0(\nfrak)}$.

\begin{remark}
It is possible to define automoprhic forms of Drinfeld type for more general subgroups of $\GL_2(\F_q[t])$. In this work we will only use forms of level $\Gamma_0(\nfrak).$
\end{remark}

\subsection{Fourier analysis}

We follow the exposition given in \cite{gekelerweil} p.42. Since automorphic Drinfeld forms for $\Gamma_0(\nfrak)$ are left invariants by the action of the algebra $\begin{pmatrix} 1& \F_q[t] \\ 0&1\end{pmatrix}$, they admit a Fourier expansion as follows. Consider the additive character $\psi_\infty\colon k_\infty \to \C^\times$ given by 

$$\psi_\infty\left( \sum a_i\pi_\infty^i\right)=\exp\left(\dfrac{2\pi i}{p} \Tr_{\F_q/\F_p}(a_{1}) \right).$$

For any ideal $\afrak=\lambda \F_q[t]$ of $\F_q[t]$, there exists coefficients $c_f(\afrak)\in\C$ such that \begin{equation}\label{fourier1}f\begin{pmatrix} \pi_\infty^r&u \\0 & 1 \end{pmatrix}=\sum_{\substack{\lambda\in \F_q[t]\\\deg \lambda+2\leq r}}q^{\deg \lambda+2-r}c_f(\afrak)\psi_\infty(\lambda  u).\end{equation}

\begin{remark}
    The matrices $\begin{pmatrix}
        \pi_\infty^r&u\\0&1
    \end{pmatrix}$ form a system of representatives for $\mathcal{Y}(\mathcal{T})$ under the identification \eqref{edges} up to orientation.
\end{remark}

For $r$ in $\Z$ and $\lambda$ in $\F_q[t]$ define $c_f(r,\lambda)=q^{\deg \lambda+2-r}c_f(\afrak)$ if $\deg \lambda+2-r\leq 0$ and $c_f(r,\lambda)=0$ otherwise. Then \begin{equation}\label{fourier2}f\begin{pmatrix} \pi_\infty^r&u \\0 & 1 \end{pmatrix}=\sum_{\lambda \in \F_q[t]}c_f(r,\lambda)\psi_\infty(\lambda  u).\end{equation} Note that this is compatible with the Fourier expansion consider in \cite{weiyu2011theta} subsection 2.1. 

\begin{remark}
    We refer to \eqref{fourier1} or \eqref{fourier2} as the Fourier expansion of $f$. Likewise, we refer to both $c_f(\afrak)$ and $c_f(r,\lambda)$ as the Fourier coefficients of $f$.
\end{remark}

If an automorphic form of Drinfeld type is cuspidal, then its constant Fourier coefficients $c_f(r,0)$ vanish.


\subsection{Hecke operators.}

For each monic polynomial $\mfrak$, there is an action of the Hecke operator $T_\mfrak$ on $\Hbar(\Tcali,\C)^{\Gamma_0(\pfrak)}$ which preserve $\Hbar_!(\Tcali,\C)^{\Gamma_0(\pfrak)}$. An explicit description of this action and its relation with Fourier coefficients can be seen in section 2.2 in\cite{weiyu2011theta}. These operators satisfies the following properties
\begin{itemize}
    \item All $T_\mfrak$ commute
    \item $T_{\mfrak \mfrak'}=T_\mfrak T_{\mfrak'}$ for $\mfrak$ coprime to $\mfrak'.$
    \item $T_{\pfrak^\ell}=T_\pfrak^\ell.$ if $\pfrak\mid \nfrak$.
    \item If $\qfrak$ is coprime to $\nfrak$, then
    $$T_{\qfrak^{n+1}}=T_{\qfrak^{n}}T_\qfrak-q^{\deg \qfrak}T_{\qfrak^{n-1}}$$
\end{itemize}

As in the classical case, when working with congruence subgroups $\Gamma_0(\nfrak)$, one can define a space of oldform in $\Hbar_{!}(\Tcali,\C)^{\Gamma_0(\nfrak)}$ as those coming from lower level forms. The new-space is then defined as the orthogonal complement of the old-space with respect to the Petersson inner product. Details about these facts can be found in Section  2.2 in \cite{weiyu2011theta}. We denote the space of newforms by $\Hbar^{\mathrm{new}}_{!}(\Tcali,\C)^{\Gamma_0(\nfrak)}$.


Following section 4.8 in \cite{gekeler1996jacobians}, the space of newforms admits a basis consisting of eigenfunction for the action of all Hecke operators $T_\mfrak$ with $\mfrak$ coprime to $\nfrak$. Moreover, the eigenvalues of the operators $T_{\qfrak}$ ($\qfrak\nmid \nfrak $) satisfies the Ramanujan's bound. More precisely, let $f$ be an eigenfunction for every operator and denote by $\lambda_\qfrak(f)$ the eigenvalue at the operator $T_\qfrak$. Then, for every prime $\qfrak$, we have $|\lambda_\qfrak(f)|\leq 2q^{\deg \qfrak/2}$. This follows from the work of Drinfeld \cite{Drinfeld} on the properties of the Frobenius eigenvalues on the cohomology of modular curves. Using equation (3.13) in \cite{gekelerweil} and assuming the normalization $c_f((1))=1$, we get the equality
$\lambda_\qfrak(f)=q^{\deg \qfrak} c_f(\qfrak)$ from where 
$$|c_f(\qfrak)|\leq 2q^{-\deg \qfrak/2}.$$

Analogously to the classical case, using the linear recurrences obtained from \[\sum_{j\geq0} c_f(\qfrak^j)X^j=\begin{cases}(1-q^{-d}\lambda_\qfrak X+q^{-d}X^2)^{-1}&\mbox{if }\qfrak\nmid\nfrak\\ (1-q^{-d}\lambda_\qfrak X)^{-1}&\mbox{if }\qfrak\mid\nfrak\end{cases}\] (see equation (3.14) in \cite{gekelerweil}), one obtains that for any cuspidal form $f\in \Hbar_!(\Tcali,\C)^{\Gamma_0(\pfrak)}$ and $\varepsilon>0$, 
\begin{equation}\label{bounds}|c_f(\mfrak)|\ll_f \sigma_0(\mfrak)q^{-\deg\mfrak/2}\ll q^{(\varepsilon-1/2)\deg\mfrak},\end{equation} where $\sigma_0(\mfrak)$ denotes the number of divisor of $\mfrak$. We are using the fact that $\sigma_0(\mfrak)=o\left(||\mfrak||^{\varepsilon} \right)=o\left(q^{(\deg \mfrak)\varepsilon}\right)$. This  is analogous to the growth rate of the function $d(\cdot)$ which count the number of positive divisors of natural numbers.


\subsection{Eisenstein and $\Theta$-series}

We recall two auxiliary automorphic forms whose Fourier coefficients are related with the terms involved in the strategy to prove Theorem \ref{mainthm1}. 

Let $i,j$ be two indices in $\{1,...,n\}$. Following section 2.1.1 in \cite{weiyu2011theta}, there exists an automorphic form of Drinfeld type $\Theta_{ij}$ such that its level is $\Gamma_0(\nfrak_0)$ and its Fourier coefficients are given by the formula

\begin{equation*}
    c_{\Theta_{ij}}(r,\lambda) = \begin{cases}
              q^{-r}B_{ij}(\afrak) & \text{if } \lambda \F_q[t]=\afrak\neq 0,\\
              q^{-r}w_j & \text{if } \lambda=0.
          \end{cases}
\end{equation*}

Now, following \cite{weiyu2011theta} pag.739, we set the Eisenstein series $E_{\nfrak_0}$ as
$$E_{\nfrak_0}=\sum_{j=1}^{n}\Theta_{ij}.$$ This sum is independent of the choice of $i$. The Fourier coefficients of $E_{\nfrak_0}$ are 

\begin{equation*}
    c_{E_{\nfrak_0}}(r,\lambda) = \begin{cases}
              q^{-r}\sigma_{\nfrak_0}(\lambda) & \text{if } \lambda\F_q[t]=\afrak\neq 0,\\
              q^{-r}\ds\sum_{i=1}^{n}\dfrac{1}{w_i} & \text{if } \lambda=0.
          \end{cases}
\end{equation*}

Let $\Pic(X_{\nfrak_0})^\vee$ be the dual group $\mathrm{Hom}(\Pic(X_{\nfrak_0}),\Z)$ and let $e^{\vee}_i$ with $1\leq i\leq n$ be the dual basis of $\mathcal{B}$. Consider $\Phi\colon \Pic(X_{\nfrak_0})\times \Pic(X_{\nfrak_0})^\vee\to \Hbar(\Tcali,\C)^{\Gamma_0(\pfrak)}$ by \[\Phi(e,e')=q^2\sum_{ij} a_ia_j'\Theta_{ij},\] where $e=\sum a_ie_i$ and $e'=\sum a_j'e^\vee_j$.

\begin{theorem}\label{decomposition} Let $\mathbb{T}_\C$ be the Hecke alegbra over $\C$ generated by the $T_\mfrak$. Define $\Hbar^{\mathrm{new}}(\Tcali,\C)^{\Gamma_0(\nfrak_0)}\coloneq\Hbar^{\mathrm{new}}_!(\Tcali,\C)^{\Gamma_0(\nfrak_0)}\oplus \C E_{\nfrak_0}$. The map $\phi$ induces a Hecke-equivariant isomorphism \[(\Pic(X_{\nfrak_0})\otimes\C)\otimes_{\mathbb{T}_\C}(\Pic(X_{\nfrak_0})^\vee\otimes\C)\cong\Hbar^{\mathrm{new}}(\Tcali,\C)^{\Gamma_0(\nfrak_0)}\]
\end{theorem}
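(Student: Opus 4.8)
The plan is to realize $\bar\Phi$ as the map induced by $\Phi$, to check it is Hecke-equivariant with image in the new space, and then to prove it is an isomorphism by a Hecke-eigensystem count. Write $M=\Pic(X_{\nfrak_0})\otimes\C$ and $M^\vee=\Pic(X_{\nfrak_0})^\vee\otimes\C$. I would first record that $M$ is a \emph{semisimple} $\mathbb{T}_\C$-module: the Brandt matrices $B(\mfrak)$ commute, and each is self-adjoint for the positive-definite pairing $\langle e_i,e_j\rangle=w_i\,\delta_{ij}$ (equivalently $w_j B_{ij}(\mfrak)=w_i B_{ji}(\mfrak)$, see \cite{weiyu2011theta}), so they are simultaneously diagonalizable. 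This pairing is Hecke-invariant and nondegenerate, so $M\cong M^\vee$ as $\mathbb{T}_\C$-modules and $M=\bigoplus_\chi M_\chi$, $M^\vee=\bigoplus_\chi M^\vee_\chi$ over the finitely many occurring characters $\chi\colon\mathbb{T}_\C\to\C$, with $\dim M_\chi=\dim M^\vee_\chi$. By the function-field Jacquet--Langlands correspondence and (strong) multiplicity one for $\GL_2$ over $k$, and since $\nfrak_0$ is squarefree, each cuspidal $\chi$ occurring in $M$ satisfies $\dim M_\chi=1$ and is the eigensystem of a unique newform in $\Hbar^{\mathrm{new}}_!(\Tcali,\C)^{\Gamma_0(\nfrak_0)}$; conversely every newform eigensystem occurs; and the remaining ``Eisenstein'' eigensystem $\chi_0$ has $\dim M_{\chi_0}=1$.

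Next I would verify that $\Phi$ descends. From the explicit action of the Hecke operators on Fourier coefficients (section 2.2 of \cite{weiyu2011theta}) together with the multiplicativity of Brandt matrices one obtains the theta--Hecke identity $T_\mfrak\Theta_{ij}=\sum_k B_{ik}(\mfrak)\Theta_{kj}$; substituting into $\Phi(e,e')=q^2\sum_{ij}a_ia_j'\Theta_{ij}$ gives Hecke-equivariance, $T_\mfrak\Phi(e,e')=\Phi(t_\mfrak e,e')$. Combining this with the symmetry relations of the Brandt matrices (and the resulting $w$-weighted compatibility between $\Theta_{ij}$ and $\Theta_{ji}$), one checks that $\Phi$ is $\mathbb{T}_\C$-balanced, $\Phi(t_\mfrak e,e')=\Phi(e,t_\mfrak^\vee e')$, where $t_\mfrak^\vee$ is the transpose action on $M^\vee$; this is the computational core, and I would carry it out by comparing Fourier expansions coefficient by coefficient. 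Hence $\Phi$ factors through a Hecke-equivariant map $\bar\Phi\colon M\otimes_{\mathbb{T}_\C}M^\vee\to\Hbar(\Tcali,\C)^{\Gamma_0(\nfrak_0)}$, and its image lies in $\Hbar^{\mathrm{new}}(\Tcali,\C)^{\Gamma_0(\nfrak_0)}$: the Fourier coefficient formula for $\Theta_{ij}$ gives $q^2E_{\nfrak_0}=\Phi(e_i,\sum_j e_j^\vee)$, so the Eisenstein line is hit, while the cuspidal part of the span of the $\Theta_{ij}$ is new at every prime dividing $\nfrak_0$ because $\Dcali$ ramifies exactly at those primes, so the associated automorphic representations are special there --- again following the analysis in \cite{weiyu2011theta}.

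Finally I would deduce that $\bar\Phi$ is an isomorphism. Since $\mathbb{T}_\C$ acts on each $M_\chi$ through a scalar, $M\otimes_{\mathbb{T}_\C}M^\vee=\bigoplus_\chi M_\chi\otimes_\C M^\vee_\chi$, so
\[
\dim\big(M\otimes_{\mathbb{T}_\C}M^\vee\big)=\sum_\chi(\dim M_\chi)^2=1+\#\{\text{cuspidal }\chi\}=1+\dim\Hbar^{\mathrm{new}}_!(\Tcali,\C)^{\Gamma_0(\nfrak_0)}=\dim\Hbar^{\mathrm{new}}(\Tcali,\C)^{\Gamma_0(\nfrak_0)}.
\]
As $\bar\Phi$ is Hecke-equivariant and both sides are semisimple Hecke modules with the same occurring eigensystems (by the Jacquet--Langlands discussion), it suffices to see $\bar\Phi$ is nonzero on each one-dimensional summand: for $\chi_0$ this is $q^2E_{\nfrak_0}\neq0$, and for a cuspidal $\chi$ it is the nonvanishing of the attached theta series, equivalently the injectivity of the Jacquet--Langlands transfer (cf.\ \cite{weiyu2011theta}). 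Then $\bar\Phi$ is injective, hence by the dimension equality an isomorphism. The hard part is not the linear algebra but the two automorphic inputs --- the precise balancedness identity for $\Phi$, and the fact that the theta series span exactly the new space together with the multiplicity-one and nonvanishing statements --- all of which rest on the function-field Jacquet--Langlands correspondence and on \cite{weiyu2011theta}.
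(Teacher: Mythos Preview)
The paper does not give its own proof of this statement: its entire argument is the one-line citation ``See Theorem 2.6 in \cite{weiyu2011theta}.'' So there is no ``paper's approach'' to compare against; you have instead sketched how the cited result is actually proved.

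As a reconstruction of that argument your outline is sound. The semisimplicity of $M=\Pic(X_{\nfrak_0})\otimes\C$ via the $w$-weighted positive-definite pairing, the identification $M\cong M^\vee$ as Hecke modules, the Hecke-equivariance and $\mathbb{T}_\C$-balancedness of $\Phi$ coming from the Brandt-matrix identity $T_\mfrak\Theta_{ij}=\sum_k B_{ik}(\mfrak)\Theta_{kj}=\sum_k B_{kj}(\mfrak)\Theta_{ik}$, and the final dimension count via Jacquet--Langlands plus multiplicity one are precisely the ingredients used in \cite{weiyu2011theta}. You also correctly flag the genuinely nontrivial inputs: the function-field Jacquet--Langlands correspondence (matching cuspidal eigensystems on $M$ bijectively with newforms of level $\nfrak_0$, with multiplicity one on both sides) and the nonvanishing of the theta lift on each eigensystem. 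One point that deserves an explicit sentence in a full write-up is the claim $\dim M_{\chi_0}=1$ for the Eisenstein eigensystem; this holds here because the order is maximal and $\nfrak_0$ is squarefree, so the only Hecke-invariant line not accounted for by cuspidal transfers is $\C\cdot e^*$, but it is not automatic in general. With that caveat, your sketch is an accurate summary of the proof that the paper is quoting.
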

\begin{proof}
    See Theorem 2.6 in \cite{weiyu2011theta}.
\end{proof}

\section{proof of Theorem \ref{mainthm1}} 

Using Theorem \ref{decomposition}, we can write $\Theta_{ij}=g_{ij}+c_{ij}E_{\nfrak_0},$ with $g_{ij}\in \Hbar_!(\Tcali,\C)^{\Gamma_0(\nfrak)}$ and $c_{ij}\in\C$. We compute the constant $c_{ij}$ by comparing Fourier expansions. For $r$ a natural number, we have
$$c_{\Theta_{ij}}(r,0)=c_{g_{ij}}(r,0)+c_{ij}c_{E_{\nfrak_0}}(r,0).$$
Then, using that $g_{ij}$ is cuspidal, 
$$\dfrac{q^{-r}}{w_j}=c_{ij}q^{-r}\deg(e^*),$$ from where we conclude that $c_{ij}=\frac{1}{w_j\deg(e^*)}$ and so $g_{ij}=\Theta_{ij}-\frac{1}{w_j\deg(e^*)}E_{\nfrak_0}$. Analysing the Fourier coefficients once more, we get

$$c_{g_{ij}}(\mfrak)=q^{-(\deg\mfrak+2)}\left(B_{ij}(\mfrak)-\frac{\sigma_{\nfrak_0}(\mfrak)
}{w_j\deg(e^*)}\right).$$

Then, by \eqref{bounds} and Proposition \ref{propBrandt} we have that

\begin{align*}
\left| \dfrac{B_{ij}(\mfrak)}{\sigma_\pfrak(\mfrak)}-\frac{1}{w_j\deg(e^*)} \right|&=\left| \dfrac{B_{kj}(\mcop)}{\sigma_\pfrak(\mcop)}-\frac{1}{w_j\deg(e^*)} \right|\\
&=q^{\deg\mcop+2}\dfrac{|c_{g_{kj}}(\mcop)|}{|\sigma_{\nfrak_0}(\mcop)|}\\
&\ll_g q^{(\varepsilon+1/2)\deg\mcop}\dfrac{1}{q^{\deg \mcop}}\\
&=q^{(\varepsilon-1/2)\deg\mcop}.\end{align*} From this we conclude that as $\deg \mcop \to \infty$, the quantity $\dfrac{B_{ij}(\mcop)}{\sigma_\pfrak(\mcop)} \to \dfrac{1}{w_j\deg(e^*)}$ and this finishes the proof.

\bibliographystyle{amsalpha}
\bibliography{refs.bib}

\end{document}